\newcommand{\rom}[1]{{\rm #1}}
\newtheorem{theorem}{Theorem}[section]
\newtheorem{lemma}[theorem]{Lemma}
\theoremstyle{definition}
\newtheorem{example}[theorem]{Example}
\newtheorem{remark}[theorem]{Remark}
\newcommand{\X}{\mathfrak X}
\newcommand{\scrC}{\mathscr C}
\newcommand{\wt}{\widetilde}
\newcommand{\supp}{\operatorname{supp}}
\begin{document}

\begin{center}{\Large \bf
Equilibrium  Kawasaki dynamics and determinantal point processes}\end{center}

{\large Eugene Lytvynov}\\ Department of Mathematics,
Swansea University, Singleton Park, Swansea SA2 8PP, U.K.\\
e-mail: \texttt{e.lytvynov@swansea.ac.uk}\vspace{2mm}

{\large Grigori Olshanski}\\
 Institute for Information Transmission Problems, Bolshoy Karetny 19, Moscow
127994, GSPÐ4, Russia; Independent University of Moscow, Russia;
Department of Mathematics, Higher School of Economics, Moscow, Russia.\\
 e-mail: \texttt{olsh2007@gmail.com}\vspace{2mm}

\noindent  AMS 2000 Subject Classification:
60K35, 60J75,  82C22 \vspace{1.5mm}

\noindent{\it Keywords:} Determinantal point process;
Gamma kernel; Gamma kernel measure; Kawasaki dynamics
 \vspace{1.5mm}

{\small

\begin{center}
{\bf Abstract}
\end{center}

\noindent  Let $\mu$ be a point process on a countable discrete space
$\mathfrak X$. Under assumption that $\mu$ is quasi-invariant with respect to
any finitary permutation of $\mathfrak X$, we describe a general scheme for
constructing an equilibrium  Kawasaki dynamics for which $\mu$ is a
symmetrizing (and hence invariant) measure. We also exhibit a two-parameter
family of point processes $\mu$ possessing  the needed quasi-invariance
property. Each process of this family is determinantal, and  its correlation
kernel is the kernel of a projection operator in $\ell^2(\mathfrak X)$.
\vspace{2mm}

\section{Introduction}

\subsection{Determinantal point processes}

Let $\mathfrak X$ be a locally compact topological space and let $\mathcal
B(\mathfrak X)$ be the Borel $\sigma$-algebra on $\mathfrak X$. The {\it
configuration space\/} $\Gamma:=\Gamma_{\mathfrak X}$ over $\mathfrak X$ is
defined as the set of all subsets $\gamma\subset\mathfrak X$  which are locally
finite. Such subsets are called {\it configurations\/}. The space $\Gamma$ can
be endowed with the vague topology, i.e., the weakest topology on $\Gamma$ with
respect to which all maps $\Gamma\ni\gamma\mapsto \sum_{x\in\gamma}f(x)$, $f\in
C_0(\mathfrak X)$, are continuous. Here $C_0(\mathfrak X)$ is the space of all
continuous real-valued functions on $\mathfrak X$ with compact support. We will
denote by $\mathcal B(\Gamma)$ the Borel $\sigma$-algebra on $\Gamma$. A
probability measure $\mu$ on $(\Gamma,\mathcal B(\Gamma))$ is called a {\it
point process\/} on $\mathfrak X$. For more detail, see, e.g., \cite{Len},
\cite{Sosh}.

A point process $\mu$ can be described with the help of {\it correlation
functions\/}. Let $m$ be a reference Radon measure on $(\mathfrak X,\mathcal
B(\mathfrak X))$. The $n$th correlation function ($n=1,2,\dots$) is a
non-negative measurable symmetric function $k_\mu^{(n)}(x_1,\dots,x_n)$ on
$\mathfrak X^n$ such that, for any measurable symmetric function
$f^{(n)}:\mathfrak X^n\to[0,\infty]$, one has
\begin{align}
&\int_\Gamma \sum_{\{x_1,\dots,x_n\}\subset\gamma}
f^{(n)}(x_1,\dots,x_n)\,\mu(d\gamma)\notag\\&\qquad
 =\frac1{n!}\,
\int_{\mathfrak X^n} f^{(n)}(x_1,\dots,x_n)
k_\mu^{(n)}(x_1,\dots,x_n)\,m(dx_1)\dotsm m(dx_n)\label{1A}
\end{align}
Under a mild condition on the growth of correlation functions as $n\to\infty$,
they determine the point process uniquely \cite{Len}.

A point process $\mu$ is called {\it determinantal\/} if there exists a
function $K(x,y)$ on $\mathfrak X^2$, called the {\it correlation kernel\/},
such that
$$
k_\mu^{(n)}(x_1,\dots,x_n)=\operatorname{det}[K(x_i,x_j)]_{i,j=1}^n,\quad
n=1,2,\dots,
$$
see e.g.\ \cite{Sosh}, \cite{Bor}. Assume that $K(x,y)$ is the  integral kernel
of a selfadjoint, locally trace class operator $K$  in the (real or complex)
space $L^2(\mathfrak X,m)$.  Then, by \cite{Sosh}, the corresponding
determinantal point process exists  if and only if  $\mathbf 0\le K\le\mathbf
1$. (Note, however, that there are natural examples of determinantal point
processes whose correlation kernel $K(x,y)$ is non-Hermitian, see, e.g.,
\cite{BO-1}, \cite{Bor}.)

If we additionally assume that $K<\mathbf 1$, i.e., $1$ does not belong to the
spectrum of $K$, then, as shown in \cite{GeYoo}, the corresponding
determinantal point process $\mu$ is  Gibbsian in a weak sense.   More
precisely,  there exists a measurable function $r:\mathfrak
X\times\Gamma\to[0,+\infty]$ such that
 \begin{equation}
\int_\Gamma \mu(d\gamma)
\sum_{x\in\gamma}
F(x,\gamma) =\int_\Gamma \mu(d\gamma)\int_{\mathfrak X}
m(dx)\, r(x,\gamma) F(x,\gamma\cup x)\label{mecke}
\end{equation}
for any measurable function $F:\mathfrak X\times \Gamma \to[0,+\infty].$  Here
and below, for simplicity of notation, we just write $x$ instead of $\{x\}$.
Note that, in the theory of point processes,  \eqref{mecke}
 is called {\it condition\/} $\Sigma_m'$, see \cite{MWM}.

It should be, however, emphasized that, in most applications, the selfadjoint
operator $K$ appears to be an orthogonal projection in $L^2(\mathfrak X,m)$,
which is why the condition $K<\mathbf 1$ is not satisfied.

\subsection{Kawasaki dynamics}

Informally, by a {\it Kawasaki dynamics\/} we mean a continuous time Markov
process on $\Gamma$ in which ``particles'' occupying positions $x\in\gamma$
randomly hop over the space $\mathfrak X$. Such a dynamics should be described
by the rate $c(\gamma,x,y)$ at which a particle occupying position $x$ of
configuration $\gamma$ jumps to a new position $y$. We will be interested in
{\it equilibrium dynamics\/}, which means that the process admits a
symmetrizing (and hence invariant) measure $\mu$, and we want to consider the
time-reversible evolution preserving $\mu$.

In the statistical mechanics literature one usually takes as $\X$ the lattice
$\mathbb Z^d$, $x$ and $y$ are assumed to be neighboring sites of the lattice,
and $\mu$ is a Gibbs measure.

Using the theory of Dirichlet forms, Kondratiev {\it et al.\/} \cite{KLR}
constructed an equilibrium Kawasaki dynamics with a continuous space $\X$ and a
classical double-potential Gibbs measure of Ruelle type as the symmetrizing
measure $\mu$. This approach was extended in \cite{LO} to the case when $\mu$
is  a determinantal point process. However, since the authors of \cite{LO}
heavily used formula \eqref{mecke}, their construction of the Kawasaki dynamics
was restricted to the case of a selfadjoint operator $K$ with $K<\mathbf 1$.

Let us also note that, in    \cite{ShYoo} (in the case where $\mathfrak X$ is a
discrete space) and in \cite{LO}, an equilibrium Glauber dynamics  (i.e., a
spacial birth-and-death process) was constructed which has a determinantal
point process as symmetrizing measure. To this end, one again needed that
$K<\mathbf 1$. Under the same assumption, an equilibrium diffusion process for
a determinantal measure was constructed in \cite{Yoo}.

The purpose of the present note is to describe a general scheme for
constructing an equilibrium Kawasaki dynamics in the case of a discrete space
$\X$. The crucial property of a measure $\mu$ on $\Gamma$ which makes it
possible to construct the dynamics, is the quasi-invariance of $\mu$ with respect
to finitary permutations of $\X$. We show that the construction can be applied
to a family of determinantal measures $\mu$ whose correlation kernels $K$ are
projection kernels. Thus, at least in a concrete case we can remove the
undesirable restriction $K<\mathbf 1$.

More precisely, we will deal with the {\it gamma kernel measures\/} which were
introduced and studied by Borodin and Olshanski in \cite{BO}. As shown there,
these determinantal point processes arise from several models of
representation-theoretic origin through certain limit transitions. The
quasi-invariance property of the gamma kernel measures is established in
\cite{Olsh2}. It would be interesting to find other natural examples of
discrete determinantal point processes possessing the quasi-invariance
property.

It is worth noting that although, on abstract level, one can find some
similarity between determinantal point processes and Gibbs measures, the Gibbs
measure technique seems to be hardly applicable to determinantal measures. The
main reason is that, in determinantal point processes, the interaction between
``particles'' is non-local. Note also that for lattice spin Gibbs measures (at
least in the case of their uniqueness), the needed quasi-invariance property is
obvious from the very definition, which is not the case for determinantal
measures.

In the present note we employ the Dirichlet form approach, but, with the
exception of a reference to the nontrivial abstract existence theorem for Hunt
processes associated with regular Dirichlet forms \cite{Fu}, we manage with
fairly easy and standard arguments.

\section{Discrete point processes}\label{Sect2}
From now on, we will assume that $\mathfrak X$ is a countable set with discrete
topology.  Thus, a configuration in $\mathfrak X$ is an arbitrary subset of
$\mathfrak X$.  We can therefore identify $\Gamma$ with $\{0,1\}^{\mathfrak
X}$, so that a subset $\gamma$ of $\mathfrak X$ is identified with its
indicator function.  Then the vague topology on $\Gamma$ is nothing else but
the product topology on $\{0,1\}^{\mathfrak X}$. Thus, $\Gamma$ is a  compact
topological space.

Let $m$ be the counting measure on $\mathfrak X$:  $m(\{x\})=1$ for each
$x\in\mathfrak X$. Let $\mu$ be a point process on $\mathfrak X$. Then, by
\eqref{1A},
$$
k_\mu^{(n)}(x_1,\dots,x_n)=\mu(\gamma\in\Gamma:
\{x_1,\dots,x_n\}\subset\gamma)
$$
for distinct  points $x_1,\dots,x_n\in\mathfrak X$, otherwise
$k_\mu^{(n)}(x_1,\dots,x_n)=0$. In this situation, the correlation functions
uniquely identify the corresponding point process. If $\mu$ is determinantal,
then its correlation kernel is simply a matrix with the row and columns indexed
by points of $\X$.

A permutation $\sigma:\mathfrak X\to\mathfrak X$ is said to be {\it finitary\/}
if it fixes all but finitely many points in $\mathfrak X$. The simplest example
of a nontrivial finitary permutation is the transposition $\sigma_{x,y}$, where
$x,y$ are distinct points in $\mathfrak X$; by definition $\sigma_{x,y}$
permutes $x$ and $y$ and leaves invariant all other points. The finitary
permutations form a countable group, which we  denote as $\mathfrak S(\X)$. The
transpositions $\sigma_{x,y}$ constitute a set of generators for $\mathfrak
S(\X)$. The tautological action of the group $\mathfrak S(\mathfrak X)$ on
$\mathfrak X$ gives rise to a natural action of this group on the space
$\Gamma=\{0, 1\}^{\mathfrak X}$ by homeomorphisms:
$$
(\sigma(\gamma))(x):=\gamma(\sigma^{-1}(x)),\quad \sigma\in\mathfrak
S(\mathfrak X),\ \gamma\in\Gamma,\ x\in\mathfrak X.
$$
Therefore, $\mathfrak S(\X)$ also acts on the set of all probability measures
on $\Gamma$. A measure $\mu$ on $\Gamma$ is said to be {\it quasi-invariant\/}
with respect to the action of $\mathfrak S(\mathfrak X)$ if for any element
$\sigma\in \mathfrak S(\mathfrak X) $ the measure $\mu$ is equivalent to
$\sigma(\mu)$. As easily seen, it suffices to require that, for any
transposition $\sigma_{x,y}$, the measure $\sigma_{x,y}(\mu)$ is absolutely
continuous with respect to $\mu$. (Note that, since $\sigma_{x,y}^2$ is the
identity, the latter condition implies that  the measure $\sigma_{x,y}(\mu)$ is
equivalent to $\mu$.)

If $\mu$ is a determinantal point process with correlation kernel $K(x,y)$,
then, for any  $\sigma\in\mathfrak S(\mathfrak X)$, the measure $\sigma(\mu)$
is also determinantal, with correlation kernel
$K^\sigma(x,y)=K(\sigma^{-1}(x),\sigma^{-1}(y))$. However, in the general case,
it is not clear how to decide whether $\mu$ is equivalent to $\sigma(\mu)$ by
looking at the kernels $K$ and $K^\sigma$.

One can raise a more general question \cite{Olsh2}: Given two determinantal
measures on $\{0,1\}^{\mathfrak X}$, how to test their equivalence (or, on the
contrary, disjointness) by inspection of their correlation kernels? Note that a
product measure $\mu$ on $\{0,1\}^{\mathfrak X}$ is the determinantal point
process with the correlation kernel $K(x,y)$ given by
$$
K(x,y)=\begin{cases}\mu(\gamma: \gamma(x)=1),&\text{if }x=y,\\
0,&\text{otherwise}.\end{cases}
$$
For product measures, the answer to the above question is well known: it is given
by the classical Kakutani theorem \cite{Kak}.

\section{Gamma kernel measures}
The quickest way of introducing the gamma kernel is as follows (see
\cite{Olsh1}). We say that a couple $(z,z')$ of complex numbers is {\it
admissible\/} if
    \begin{equation}\label{3A}
    (z+n)(z'+n)>0\quad \text{for all }n\in\mathbb Z.
    \end{equation}
 This condition is satisfied if
    \begin{align*}&\text{either $z\in\mathbb C\setminus\mathbb Z$ and $z'=\bar z$}\\
    &\text{or there exists $m\in\mathbb Z$ such that $m<z$, $z'<m+1$}.
    \end{align*}
In what follows, we fix an admissible couple  of parameters, $(z,z')$.

Next, we identify $\mathfrak X$ with the lattice $\mathbb Z':=\mathbb
Z+\frac12$ of half-integers and consider the following second-order difference
operation on the lattice $\mathbb Z'$:
\begin{align*}
\mathcal (\mathcal D_{z,z'}f)(x)&=\sqrt{\left(z+x+\frac12\right)
\left(z'+x+\frac12\right)}f(x+1)-(2x+z+z')f(x)\\
&\quad+\sqrt{\left(z+x-\frac12\right)\left(z'+x-\frac12\right)}f(x-1),
\end{align*}
where $x\in\mathbb Z'$ and $f(x)$ is a test function on $\mathbb Z'$. Note that
$x\pm\frac12$ is an integer for any $x\in\mathbb Z'$. Consequently, by virtue
of  \eqref{3A}, the quantities under the sign of square root are strictly
positive, so that we may extract the positive square root.

Let $D_{z,z'}$ stand for the operator in $\ell^2(\mathbb Z')$ which is defined
by the operation $\mathcal D_{z,z'}$ on the domain consisting of all
$f\in\ell^2(\mathbb Z')$ such that $\mathcal D_{z,z'}f\in\ell^2(\mathbb Z')$.
One can prove that $D_{z,z'}$ is selfadjoint and has simple, purely continuous
spectrum filling the whole real axis.

Let $K_{z,z'}$ be the spectral projection associated with the selfadjoint
operator $D_{z,z'}$ and corresponding to the positive part of the spectrum.
That is, denoting by $Q(\cdot)$ the projection-valued measure on $\mathbb R$
that governs the spectral decomposition of $D_{z,z'}$, we set
$K_{z,z'}:=Q((0,+\infty))$. We define $\mu_{z,z'}$ as the determinantal measure
on $\Gamma$ with the correlation kernel $K_{z,z'}(x,y)$---the integral kernel
of the operator $K_{z,z'}$.

As shown in \cite{BO}, the kernel $K_{z,z'}(x,y)$ admits an explicit expression
in terms of the classical $\Gamma$-function:
$$
K_{z,z'}(x,y)=\frac{\sin(\pi z)\sin(\pi
z')}{\pi\sin(\pi(z-z'))}\cdot\frac{A(x)B(y)-B(x)A(y)}{x-y}\, ,\quad
x,y\in\mathbb Z',
$$
where
$$
A(x)=\frac{\Gamma(z+x+\frac12)}{\sqrt{\Gamma(z+x+\frac12)\Gamma(z'+x+\frac12)}}\,, \quad B(x)=
\frac{\Gamma(z'+x+\frac12)}{\sqrt{\Gamma(z+x+\frac12)\Gamma(z'+x+\frac12)}}\,.
$$
Note that the quantity under the sign of square root is always strictly
positive. The above expression is well defined provided that $x\ne y$, $z\ne
z'$.
For $x=y$, one takes the formal limit as $y\to x$, which leads to
$$
K_{z,z'}(x,x)=\frac{\sin(\pi z)\sin(\pi z')}{\pi\sin(\pi(z-z'))}
\left(\psi\left(z+x+\frac12\right)-\psi\left(z'+x+\frac12\right)\right),\quad x\in\mathbb Z',
$$
where $\psi(x)=\Gamma'(x)/\Gamma(x)$ is the logarithmic derivative of the gamma
function. The definition for the case $z=z'\in\mathbb R\setminus\mathbb Z$ is
obtained by taking the limits as $z'\to z$.

We call $K_{z,z'}(x,y)$ and $\mu_{z,z'}$ the {\it gamma kernel\/} (with
parameters $z,z'$) and the  {\it gamma kernel measure\/}, respectively. For
more detail about the gamma kernel measures and related measures on partitions
(the so-called {\it z-measures\/}), see \cite{BO-1,BO, BO1, BO2}.

\begin{theorem}[\cite{Olsh2}]\label{th3A}
All gamma kernel measures $\mu_{z,z'}$ are quasi-invariant with respect to the
action of the group $\mathfrak S(\mathbb Z')$.
 \end{theorem}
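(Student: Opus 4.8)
The plan is to reduce the whole statement to a single transposition and then to a purely local computation. As the excerpt records, $\mathfrak S(\mathbb Z')$ is generated by the transpositions $\sigma_{x,y}$, and quasi-invariance is stable under composition: if $\sigma(\mu_{z,z'})\sim\mu_{z,z'}$ and $\tau(\mu_{z,z'})\sim\mu_{z,z'}$ then $(\sigma\tau)(\mu_{z,z'})=\sigma(\tau(\mu_{z,z'}))\sim\sigma(\mu_{z,z'})\sim\mu_{z,z'}$. Moreover, since $\sigma_{x,y}^2=\mathrm{id}$, it suffices to prove the one-sided absolute continuity $\sigma_{x,y}(\mu_{z,z'})\ll\mu_{z,z'}$ for an arbitrary pair of distinct half-integers $x,y$. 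Thus everything hinges on understanding one transposition.

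Next I would localize. The homeomorphism $\sigma_{x,y}$ changes a configuration $\gamma$ only at the two sites $x,y$: it acts trivially on the event $\{\gamma(x)=\gamma(y)\}$ and interchanges the two states $(\gamma(x),\gamma(y))=(1,0)$ and $(0,1)$. Hence the Radon--Nikodym derivative, if it exists, equals $1$ on $\{\gamma(x)=\gamma(y)\}$ and, on the transported part, is governed entirely by the conditional law of $(\gamma(x),\gamma(y))$ given the restriction $\gamma'$ of $\gamma$ to $\mathbb Z'\setminus\{x,y\}$. Writing $q_{10}(\gamma')$ and $q_{01}(\gamma')$ for the conditional probabilities of the states $(1,0)$ and $(0,1)$, the measures $\sigma_{x,y}(\mu_{z,z'})$ and $\mu_{z,z'}$ are equivalent precisely when $q_{10}(\gamma')>0\Leftrightarrow q_{01}(\gamma')>0$ for $\mu_{z,z'}$-almost every $\gamma'$, in which case
$$
\frac{d\,\sigma_{x,y}(\mu_{z,z'})}{d\mu_{z,z'}}(\gamma)=\frac{q_{01}(\gamma')}{q_{10}(\gamma')}\quad\text{on the set }\{(\gamma(x),\gamma(y))=(1,0)\},
$$
and reciprocally on the complementary transported set. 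In short, the theorem comes down to showing that the conditional odds of placing the lone particle at $x$ rather than at $y$ are almost surely finite and strictly positive.

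The core task is therefore to compute, or at least to bound from above and below, the ratio $q_{10}(\gamma')/q_{01}(\gamma')$ for the determinantal measure $\mu_{z,z'}$. I would realize these conditional probabilities as limits, over an exhaustion of $\mathbb Z'\setminus\{x,y\}$ by finite windows $W$, of finite determinantal expressions built from the restriction of $K_{z,z'}$ to $W\cup\{x,y\}$; in the finite-window setting each local probability is a genuine determinant obtained from the kernel by inclusion--exclusion, so the ratio is a ratio of such determinants in which the common normalization cancels. One expects this ratio to stabilize to an explicit expression involving the functions $A(\cdot),B(\cdot)$ and the background $\gamma'$, which should be finite and strictly positive: here the admissibility condition \eqref{3A}, which keeps $A,B$ well defined and real, together with the purely continuous (hence ``soft'') spectrum underlying the projection $K_{z,z'}$, should rule out the degeneracies $q_{10}=0<q_{01}$ that would destroy absolute continuity.

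The main obstacle, and the reason the standard machinery does not apply, is exactly that $K_{z,z'}$ is an orthogonal projection, so $\mathbf 1$ lies in its spectrum and the Papangelou-type intensity built from $L=K_{z,z'}(\mathbf 1-K_{z,z'})^{-1}$ diverges; one cannot simply invoke the identity \eqref{mecke} as in the case $K<\mathbf 1$. Observe that $\sigma_{x,y}(\mu_{z,z'})$ is again the determinantal measure with projection $U K_{z,z'} U$, where $U$ is the unitary on $\ell^2(\mathbb Z')$ swapping $e_x,e_y$, and $U K_{z,z'} U-K_{z,z'}$ has finite rank; yet finiteness of this rank is by itself insufficient for equivalence (for coordinate projections such a swap can produce mutually singular, even deterministic, measures), so the analytic structure of the gamma kernel must genuinely enter. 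The divergence of the intensity is a common overall factor that is meant to cancel in the odds $q_{10}/q_{01}$, and carrying out this cancellation rigorously is the step I expect to require the real work: either by the finite-window regularization above, or by replacing the sharp cutoff $Q((0,+\infty))$ by a smooth symbol $f(D_{z,z'})$ with $0<f<1$ (so that $K<\mathbf 1$ and the density is accessible by the existing theory) and then passing to the limit $f\to\mathbf 1_{(0,+\infty)}$, in each case using the explicit gamma-function form of $K_{z,z'}(x,y)$ to identify an almost surely finite and positive limiting density.
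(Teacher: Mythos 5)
The paper does not actually prove this theorem: it is imported verbatim from \cite{Olsh2}, and the only hint given about the real argument is the remark following the statement, namely that the Radon--Nikod\'ym derivative of $\sigma_{x,y}(\mu_{z,z'})$ with respect to $\mu_{z,z'}$ is an explicit infinite product which converges only on a topologically meager but full-measure subset of $\Gamma$; in \cite{Olsh2} this is obtained through the algebra of multiplicative functionals and an approximation of $\mu_{z,z'}$ by z-measures. Your reduction to a single transposition and to the one-sided absolute continuity $\sigma_{x,y}(\mu_{z,z'})\ll\mu_{z,z'}$ is correct (and is already recorded in Section 2 of the paper), and your localization of the putative density to the conditional odds $q_{01}(\gamma')/q_{10}(\gamma')$ is a sound reformulation of what must be proved. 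You have also correctly diagnosed why the route through \eqref{mecke} is closed: $K_{z,z'}$ is a projection, so $K<\mathbf 1$ fails.

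However, the proposal stops exactly where the theorem begins. Every substantive assertion is phrased as an expectation (``one expects this ratio to stabilize'', ``should rule out the degeneracies'', ``the step I expect to require the real work''), and no argument is offered for any of the three claims that jointly constitute the proof: (i) that $q_{10}(\gamma')>0$ if and only if $q_{01}(\gamma')>0$ for $\mu_{z,z'}$-a.e.\ $\gamma'$; (ii) that the finite-window determinant ratios converge as the window exhausts $\mathbb Z'\setminus\{x,y\}$; (iii) that the limit is a.s.\ finite and strictly positive. Point (ii) is the crux: for a finite window $W$ the conditional law of $\gamma|_{\{x,y\}}$ given $\gamma|_{W}$ is indeed a ratio of finite determinants built from $K_{z,z'}$ by inclusion--exclusion, but what is needed is the conditional law given the \emph{entire} exterior configuration, and the existence and nondegeneracy of that reverse-martingale limit is precisely the hard analytic content of \cite{Olsh2}; an appeal to the admissibility condition \eqref{3A} and to the continuity of the spectrum of $D_{z,z'}$ does not engage with it. Your own cautionary example makes the point: a finite-rank perturbation of a projection kernel can yield a mutually singular determinantal measure, so nothing short of a genuine computation with the gamma kernel can close the gap. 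As written, the proposal is a reasonable research plan sketching a route (local conditional odds) different from Olshanski's (a global infinite-product formula for the density, controlled via multiplicative functionals), but it is not a proof.
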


As shown in \cite{Olsh2}, the Radon--Nikod\'ym derivative of
$\sigma_{x,y}(\mu_{z,z'})$ relative to $\mu_{z,z'}$ admits an explicit
expression. This expression involves an infinite product which converges only
when $\gamma$ belongs to a relatively meager subset of the whole configuration
space $\Gamma$. Fortunately, this subset has full measure.

\section{Construction of dynamics}

Let again $\mathfrak X$ be as in Section~\ref{Sect2}. Let $\mu$ be a point
process on $\mathfrak X$ which is  quasi-invariant with respect to the action
of $\mathfrak S(\mathfrak X)$. Thus,
\begin{equation}\label{4A}
\text{for any distinct $x,y\in\mathfrak X$, the measures $\mu$ and
$\sigma_{x,y}(\mu)$ are equivalent}
\end{equation}

Let $\mathscr C$ denote the space of all cylinder functions on $\Gamma$, i.e.,
a function $F:\Gamma\to\mathbb R$ is in $\mathscr C$ if and only if there
exists a finite subset $\Lambda\subset\mathfrak X$ and a function $\wt
F:\{0,1\}^\Lambda\to\mathbb R$ such that $F(\gamma)=\wt F(\gamma_{\Lambda})$,
$\gamma\in \Gamma$, where $\gamma_\Lambda$ is the restriction of $\gamma$ to
$\Lambda$.  Note that each $F\in\mathscr C$ is continuous on $\Gamma$. Let
$\tilde\scrC$ stand for the dense subspace in $L^2(\Gamma,\mu)$ formed by the
images of the cylinder functions. If $\operatorname{supp}\mu=\Gamma$, then
$\tilde\scrC$ can be identified with $\scrC$. (Here and below $\supp\mu$, the
topological support of $\mu$, is the smallest closed subset of full measure.)

Let $\tilde{\mathfrak X}^2:=\{(x,y)\in\mathfrak X^2\mid x\ne y\}$. Let
$c:\Gamma\times \tilde{\mathfrak X}^2\to[0,\infty)$  be a measurable function
satisfying the symmetry relation $c(\gamma,x,y)=c(\gamma,y,x)$. That is, given
$\gamma$, $c(\gamma,x,y)$ depends on the {\it unordered\/} couple $\{x,y\}$.
(Here and below all relations involving $c(\gamma,x,y)$ are assumed to hold for
$\mu$-a.a. $\gamma\in\Gamma$.) As will be clear from the formulas below, we
will actually exploit only the restriction of the function $c$ to the subset of
those triples $(\gamma,x,y)$ for which $\gamma$ contains precisely one of the
points $x,y$; then, informally, $c(\gamma,x,y)$ is the rate of jump from
position $\gamma\cap\{x,y\}$ to the new position
$\{x,y\}\setminus(\gamma\cap\{x,y\})$. For this reason one can call
$c(\gamma,x,y)$ the {\it rate function\/}.

For any $(x,y)\in\tilde\X^2$ we define the operator $\nabla_{x,y}$ acting on
functions $F(\gamma)$ according to formula
$$
(\nabla_{x,y}F)(\gamma):=F(\sigma_{x,y}(\gamma))-F(\gamma).
$$

In accordance with the intuitive meaning of the rate function, we would like to
define the generator of the future dynamics by the formula
\begin{equation}\label{4D}
-(AF)(\gamma):=\sum_{(x,y)\in\tilde\X^2}c(\gamma,x,y)(\nabla_{x,y}F)(\gamma),
\end{equation}
where $F$ ranges over an appropriate space of functions on $\Gamma$. (We put
the minus sign in the left-hand side for convenience, because we want $A$
to be a nonnegative operator.) To make the definition rigorous we have to
specify the domain of the operator, and we also have to impose suitable
conditions on the rate function. Let us consider the following two conditions:
\begin{gather}
\mbox{\rm ``Symmetry'': for any fixed $(x,y)\in\tilde\X^2$, the measure
$c(\gamma,x,y)\mu(d\gamma)$ is $\sigma_{x,y}$-invariant.} \label{4B}\\
\mbox{\rm ``$L^2$-condition'': for any fixed $x\in\mathfrak X$,} \quad
\sum_{y\in\mathfrak X,\ y\ne x}c(\,\cdot\,,x,y)\in L^2(\Gamma,\mu). \label{4C}
\end{gather}
The ``symmetry condition'' is analogous to the ``detailed balance condition''
for lattice spin systems of statistical mechanics; such a condition is
necessary if we want the future Markov process to be symmetric (that is,
reversible with respect to $\mu$). The ``$L^2$-condition'' is a technical
assumption; below we will also introduce a weaker condition, see \eqref{4F}.

\begin{lemma}\label{le4A}
Assume the rate function $c(\gamma,x,y)$ satisfies \eqref{4B} and \eqref{4C}.
Then the formula \eqref{4D} correctly determines a nonnegative symmetric
operator $A$ in $L^2(\Gamma,\mu)$ with dense domain $\tilde\scrC$.
\end{lemma}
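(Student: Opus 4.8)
The plan is to verify three things in turn: that for each cylinder function $F$ the right-hand side of \eqref{4D} converges $\mu$-a.e.\ and defines an element of $L^2(\Gamma,\mu)$; that the resulting operator $A$ is symmetric; and that it is nonnegative. Density of the domain $\tilde\scrC$ is already recorded above. One should also note at the outset that the quasi-invariance \eqref{4A} guarantees that each $\sigma_{x,y}$ preserves the $\mu$-null sets, so that $F\mapsto\nabla_{x,y}F$, and hence $A$, is well defined on the $L^2$-classes constituting $\tilde\scrC$ and does not depend on the choice of representative.

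First I would reduce the sum in \eqref{4D} to a manageable form. If $F\in\scrC$ depends only on $\gamma_\Lambda$ for a finite set $\Lambda\subset\X$, then $(\nabla_{x,y}F)(\gamma)=0$ whenever $\{x,y\}\cap\Lambda=\emptyset$, since in that case $\sigma_{x,y}$ leaves $\gamma_\Lambda$ unchanged; moreover $|\nabla_{x,y}F|\le 2\|F\|_\infty$ everywhere. Using the symmetry of $c$, the surviving pairs are dominated by
\[
\sum_{(x,y)\in\tilde\X^2}c(\gamma,x,y)\,\bigl|(\nabla_{x,y}F)(\gamma)\bigr|
\le 4\|F\|_\infty\sum_{x\in\Lambda}\;\sum_{y\ne x}c(\gamma,x,y).
\]
By the ``$L^2$-condition'' \eqref{4C} each inner sum lies in $L^2(\Gamma,\mu)$, and since $\Lambda$ is finite the whole right-hand side belongs to $L^2(\Gamma,\mu)\subset L^1(\Gamma,\mu)$. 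This shows that the series defining $-(AF)(\gamma)$ converges absolutely for $\mu$-a.a.\ $\gamma$ and that $AF\in L^2(\Gamma,\mu)$; it also supplies the integrable dominating function needed to apply Fubini's theorem below.

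For symmetry and nonnegativity I would pass to a Dirichlet-form representation. Fix $(x,y)$ and set $\nu_{x,y}(d\gamma):=c(\gamma,x,y)\,\mu(d\gamma)$; by \eqref{4C} this is a finite measure, and by the ``symmetry'' condition \eqref{4B} it is $\sigma_{x,y}$-invariant. Since $\sigma_{x,y}^2=\mathrm{id}$, the change of variable $\gamma\mapsto\sigma_{x,y}(\gamma)$ gives $\int (F\circ\sigma_{x,y})\,G\,d\nu_{x,y}=\int F\,(G\circ\sigma_{x,y})\,d\nu_{x,y}$ for bounded $F,G$, and a short expansion of $(\nabla_{x,y}F)(\nabla_{x,y}G)$ via this identity yields $\int (\nabla_{x,y}F)\,G\,d\nu_{x,y}=-\tfrac12\int(\nabla_{x,y}F)(\nabla_{x,y}G)\,d\nu_{x,y}$. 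Summing over $(x,y)$, with the interchange of sum and integral justified by the dominating function from the first step, I obtain for $F,G\in\tilde\scrC$
\[
\langle AF,G\rangle_{L^2(\mu)}
=\tfrac12\sum_{(x,y)\in\tilde\X^2}\int_\Gamma c(\gamma,x,y)\,(\nabla_{x,y}F)(\gamma)\,(\nabla_{x,y}G)(\gamma)\,\mu(d\gamma).
\]
This expression is manifestly symmetric in $F$ and $G$, so $A$ is symmetric; putting $G=F$ and using $c\ge 0$ gives $\langle AF,F\rangle_{L^2(\mu)}\ge 0$, so $A$ is nonnegative.

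The only genuinely delicate point is the passage to this representation: one must ensure that the termwise change of variable and the interchange of summation and integration are legitimate. Both are controlled by the absolute bound from the first step together with the finiteness of each $\nu_{x,y}$, so no hypotheses beyond \eqref{4B} and \eqref{4C} are required, and the remaining verifications are routine.
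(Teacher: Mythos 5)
Your proof is correct and follows essentially the same route as the paper's: well-definedness on $L^2$-classes via quasi-invariance, the reduction to finitely many $x\in\Lambda$ together with the ``$L^2$-condition'' to get $AF\in L^2(\Gamma,\mu)$, and the identity $(AF,G)=\mathcal E(F,G)$ (which the paper states as ``easy to check'' and you verify explicitly via the $\sigma_{x,y}$-invariance of $c(\cdot,x,y)\mu$) to conclude symmetry and nonnegativity.
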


\begin{proof} Let $F$ range over $\scrC$. If $F=0$ $\mu$-a.e., then, due to the quasi-invariance of $\mu$,
the same holds for $\nabla_{x,y}F$, so that the right-hand side of \eqref{4D}
also equals 0 $\mu$-a.e. Thus, $AF$ depends only on the image of $F$ in
$\tilde\scrC$.

Next, because of \eqref{4C}, $AF\in L^2(\Gamma,\mu)$. Indeed, write
$F(\gamma)=\wt F(\gamma_\Lambda)$ as above, with an appropriate finite subset
$\Lambda\subset\X$. Then $\nabla_{x,y}F$ vanishes when both $x$ and $y$ are
outside $\Lambda$. Therefore, we may assume that at least one of the points
$x,y$ (say, $x$) is inside $\Lambda$. Since $|(\nabla_{x,y}F)(\gamma)|$ is
bounded from above by a constant not depending on $x$, $y$ and $\gamma$, we see from \eqref{4C}
that, for any fixed $x\in\Lambda$, the sum over $y$ of the functions
$c(\,\cdot\,,x,y)(\nabla_{x,y}F)(\cdot)$ is in $L^2(\Gamma,\mu)$. This is sufficient, because there
are only finitely many $x\in\Lambda$.

Let us set \begin{equation}\label{4E} \mathcal
E(F,G):=\frac12\int_{\Gamma}\mu(d\gamma)\sum_{(x,y)\in \tilde{\mathfrak X}^2}
c(\gamma,x,y)(\nabla_{x,y}F)(\gamma)(\nabla_{x,y}G)(\gamma),\quad
F,G\in\tilde{\mathscr C}.
\end{equation}
 Using \eqref{4B},
it is easy to check that
$$
(AF,G)=\mathcal E(F,G), \quad F,G\in\tilde{\scrC},
$$
where $(\,\cdot\,,\,\cdot\,)$ denotes the inner product in $L^2(\Gamma,\mu)$.

Finally, the fact that $A$ is symmetric and nonnegative is evident, because the
bilinear form \eqref{4E} clearly possesses these properties.
\end{proof}

Let us examine the expression \eqref{4E}. Observe that it still makes sense and
correctly defines a symmetric bilinear form on $\tilde\scrC\times\tilde\scrC$
if we replace \eqref{4C} by the weaker condition
\begin{equation}\label{4F}
\mbox{\rm ``$L^1$-condition'': for each $x\in\mathfrak X$, }
\sum_{y\in\mathfrak X,\ y\ne x}c(\,\cdot\,,x,y)\in L^1(\Gamma,\mu);
\end{equation}
the proof is the same as above. As for the symmetry condition \eqref{4D}, it is
actually not restrictive: one can always modify the rate function, without
changing $\mathcal E(F,G)$, in such a way that \eqref{4D} will be fulfilled:
For each $(x,y)$, we simply take the average of the measure $c(\,\cdot\,,x,y)\mu$ and its image under
 $\sigma_{x,y}$ (the resulting measure will remain absolutely
continuous with respect to $\mu$).

\begin{lemma}
Under the ``$L^1$-condition'' \eqref{4F}, the form $(\mathcal E,\tilde\scrC)$
defined by \eqref{4E} is closable on $L^2(\Gamma,\mu)$\rom.
\end{lemma}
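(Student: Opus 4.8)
The plan is to use the fact that the form \eqref{4E} decomposes as a countable sum over pairs, and to reduce closability to two independent facts: closability of each single-pair form, and an abstract principle that a convergent sum of closable forms is closable.

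First I would record the decomposition. All summands in \eqref{4E} being nonnegative, monotone convergence gives
$$
\mathcal E=\sum_{(x,y)\in\tilde\X^2}\mathcal E_{x,y},\qquad \mathcal E_{x,y}(F,G):=\tfrac12\int_\Gamma c(\gamma,x,y)(\nabla_{x,y}F)(\gamma)(\nabla_{x,y}G)(\gamma)\,\mu(d\gamma).
$$
In view of the remark preceding the lemma I may assume the symmetry condition \eqref{4B}; then, for each fixed $(x,y)$, the measure $\nu_{x,y}:=c(\,\cdot\,,x,y)\,\mu$ is $\sigma_{x,y}$-invariant, and by the ``$L^1$-condition'' \eqref{4F} it is a finite measure. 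Since $\nabla_{x,y}F$ is a bounded cylinder function, each $\mathcal E_{x,y}$ is finite on $\tilde\scrC$, and the finiteness of $\mathcal E$ itself on $\tilde\scrC$ under \eqref{4F} was already noted above.

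Next I would prove that each $\mathcal E_{x,y}$ is closable. On $L^2(\Gamma,\nu_{x,y})$ the map $U\colon F\mapsto F\circ\sigma_{x,y}$ is a self-adjoint unitary involution (unitarity by the $\sigma_{x,y}$-invariance of $\nu_{x,y}$), and one checks directly that $\mathcal E_{x,y}(F,G)=(F,(I-U)G)_{L^2(\nu_{x,y})}$, so on that space the form is governed by a bounded nonnegative self-adjoint operator. To obtain closability in $L^2(\Gamma,\mu)$, I take $F_k\in\tilde\scrC$ with $F_k\to0$ in $L^2(\mu)$ and $(F_k)$ being $\mathcal E_{x,y}$-Cauchy, and set $h_k:=F_k\circ\sigma_{x,y}-F_k$; the Cauchy property means $h_k\to h$ in $L^2(\nu_{x,y})$. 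Since $F_k\to0$ in $L^2(\mu)$ it converges to $0$ in $\mu$-measure, hence in $\nu_{x,y}$-measure (as $\nu_{x,y}\ll\mu$ is finite); invariance of $\nu_{x,y}$ then gives $F_k\circ\sigma_{x,y}\to0$ in $\nu_{x,y}$-measure as well, so $h_k\to0$ in $\nu_{x,y}$-measure. Uniqueness of limits forces $h=0$, whence $\mathcal E_{x,y}(F_k)=\tfrac12\|h_k\|_{L^2(\nu_{x,y})}^2\to0$, which is exactly closability.

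Finally I would assemble the pieces through the abstract principle: if $\mathcal E=\sum_n\mathcal E_n$ with each $\mathcal E_n$ a closable nonnegative symmetric form on a common dense domain and $\mathcal E$ finite there, then $\mathcal E$ is closable. For $F_k\to0$ in $L^2(\mu)$ that is $\mathcal E$-Cauchy, the domination $\mathcal E_n\le\mathcal E$ makes $(F_k)$ also $\mathcal E_n$-Cauchy, so $\mathcal E_n(F_k)\to0$ and hence $\sum_{n\le m}\mathcal E_n(F_k)\to0$ for each fixed $m$; the Cauchy property then bounds the tail $\sum_{n>m}\mathcal E_n(F_k)$ uniformly in $k$ (fix $K$ with $\mathcal E(F_k-F_K)$ small for $k\ge K$, then $m$ with $\sum_{n>m}\mathcal E_n(F_K)$ small), and letting the truncation error tend to $0$ yields $\mathcal E(F_k)\to0$. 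The main obstacle, and the only genuinely delicate point, is the mismatch between the ambient space $L^2(\mu)$, in which closability must hold, and the spaces $L^2(\nu_{x,y})$ in which the elementary forms naturally live; bridging it via convergence in measure (transported from $\mu$ to $\nu_{x,y}$ by absolute continuity and made compatible with $\sigma_{x,y}$ by invariance) is the heart of the argument, with the uniform tail estimate being the second point to handle with care.
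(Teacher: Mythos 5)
Your proof is correct, but it follows a genuinely different route from the paper's. The paper argues directly on the whole form: it passes to a subsequence $F_{n_k}\to 0$ $\mu$-a.e., uses the quasi-invariance \eqref{4A} to get $F_{n_k}\circ\sigma_{x,y}\to 0$ $\mu$-a.e.\ for every pair, and then applies Fatou's lemma to the sum-plus-integral defining $\mathcal E$ to bound $\mathcal E(F_{n_k})$ by $\liminf_m\mathcal E(F_{n_k}-F_{n_m})$ (following \cite[Example 1.2.4]{Fu}). You instead modularize: you split $\mathcal E$ into the single-pair forms $\mathcal E_{x,y}$, prove each is closable by a convergence-in-measure argument in $L^2(\nu_{x,y})$ (correctly handling the mismatch between $L^2(\mu)$ and $L^2(\nu_{x,y})$ via finiteness of $\nu_{x,y}$, absolute continuity, and $\sigma_{x,y}$-invariance), and then invoke --- and prove --- the standard principle that a pointwise-finite countable sum of closable nonnegative forms on a common domain is closable; your tail estimate for that principle is the usual one and is carried out correctly. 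The paper's argument is shorter and needs neither the symmetry condition \eqref{4B} nor the summation principle, only quasi-invariance and Fatou; yours is more structured, isolates the reusable abstract lemma, and makes explicit where each hypothesis enters (note that your appeal to \eqref{4B} via the averaging remark is legitimate but avoidable: quasi-invariance of $\mu$ alone already transports convergence in $\nu_{x,y}$-measure through $\sigma_{x,y}$). Both proofs are sound; no gaps.
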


\begin{proof}
Note that under the stronger ``$L^2$-condition'' \eqref{4C}, the claim is
evident, because the quadratic form corresponding to a symmetric operator is
always closable. Without \eqref{4C}, the argument is slightly lengthier (cf.
\cite[Example 1.2.4]{Fu}).

For any $F\in\mathscr C$, we abbreviate $\mathcal E(F):=\mathcal E(F,F)$. Let
$(F_n)_{n=1}^\infty$ be a sequence in $\mathscr C$ such that
$\|F_n\|_{L^2(\Gamma,\mu)}\to0$ as $n\to\infty$ and
\begin{equation}\label{4G}
\mathcal E(F_k-F_m)\to0\quad\text{as }k,m\to\infty.
\end{equation}
To prove the closability of $\mathcal E$, it suffices to show that there exists
a subsequence $(F_{n_k})_{k=1}^\infty$ such that  $\mathcal E(F_{n_k})\to0$ as
$k\to\infty$. Since $\|F_n\|_{L^2(\Gamma,\mu)}\to0$ as $n\to\infty$, there
exists a subsequence $(F_{n_k})_{k=1}^\infty$  such  that $F_{n_k}(\gamma)\to0
$ as $k\to\infty$ for $\mu$-a.a.\ $\gamma\in\Gamma$. Then, by \eqref {4A}, for
any  $(x,y)\in\tilde{\mathfrak  X}^2$, $F_{n_k}(\sigma_{x,y}\gamma)\to0 $ as
$k\to\infty$ for $\mu$-a.a.\ $\gamma\in\Gamma$. Therefore, for any
$(x,y)\in\tilde{\mathfrak X}^2$,

\begin{equation}\label{4H}
(F_{n_k}(\sigma_{x,y}\gamma)-F_{n_k}(\gamma))\to0\quad\text{as
}k\to\infty\quad\text{for $\mu$-a.a.\ $\gamma\in\Gamma$}.
\end{equation}
Now, by \eqref{4H} and Fatou's lemma,
\begin{align*}
&2\mathcal E(F_{n_k})=\sum_{(x,y)\in\tilde{\mathfrak
X}^2}\int_{\Gamma}c(\gamma,x,y)(F_{n_k}(\sigma_{x,y}(\gamma))-F_{n_k}(\gamma))^2\mu(d\gamma)
\\ &=\sum_{(x,y)\in\tilde{\mathfrak
X}^2}\int_{\Gamma}c(\gamma,x,y)\big((F_{n_k}(\sigma_{x,y}(\gamma))-F_{n_k}(\gamma))
-\lim_{m\to\infty}(F_{n_m}(\sigma_{x,y}(\gamma))-F_{n_m}(\gamma))
\big)^2\mu(d\gamma)\\
&\le\liminf_{m\to\infty}\sum_{(x,y)\in\tilde{\mathfrak X}^2}\int_{\Gamma}
c(\gamma,x,y)\big((F_{n_k}(\sigma_{x,y}(\gamma))-F_{n_k}(\gamma))
-(F_{n_m}(\sigma_{x,y}(\gamma))-F_{n_m}(\gamma))
\big)^2\mu(d\gamma)\\
&=2\liminf_{m\to\infty}\mathcal E(F_{n_k}-F_{n_m}),
\end{align*}
which by \eqref{4G} can be made arbitrarily small for $k$ large enough.
\end{proof}

We denote by $(\bar{\mathcal E},\mathbb D(\bar{\mathcal E}))$ the closure of
$(\mathcal E,\tilde\scrC)$ on $L^2(\Gamma,\mu)$ (thus $\mathbb D(\bar{\mathcal
E})$ is the domain of $\bar{\mathcal E}$). For the notions of a Dirichlet form
and of a regular Dirichlet form, appearing in the following lemma, we refer to
e.g. \cite[Section~1.1]{Fu}.

\begin{lemma}\label{le4C} Assume \eqref{4F}. Then the form $(\bar{\mathcal
E},\mathbb D(\bar{\mathcal E}))$ just defined is a regular Dirichlet form.
\end{lemma}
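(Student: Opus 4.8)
The plan is to verify the two defining properties of a regular Dirichlet form separately: first that $(\bar{\mathcal E},\mathbb D(\bar{\mathcal E}))$ is a Dirichlet form (i.e.\ closed, symmetric, nonnegative, and Markovian), and then that it is regular (i.e.\ admits a core consisting of continuous functions of compact support, in the appropriate sense for the compact space $\Gamma$). The closedness is free: by the previous lemma the form $(\mathcal E,\tilde\scrC)$ is closable under \eqref{4F}, and $(\bar{\mathcal E},\mathbb D(\bar{\mathcal E}))$ is by definition its closure, hence closed. Symmetry and nonnegativity are inherited from the explicit expression \eqref{4E}, which is manifestly symmetric in $F,G$ and nonnegative on the diagonal since each summand carries the factor $c(\gamma,x,y)\ge 0$.

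The substantive point is the Markov property: I would show that the unit contraction operates on $\mathcal E$, i.e.\ that for $F\in\tilde\scrC$ the contraction $F^\#:=(0\vee F)\wedge 1$ satisfies $\mathcal E(F^\#)\le\mathcal E(F)$. First I would check this on the generating core $\tilde\scrC$ and then pass to the closure. The key observation is the pointwise contraction inequality for the difference operator: for any reals $a,b$ and the normal contraction $t\mapsto t^\#$, one has $|a^\#-b^\#|\le|a-b|$, so that
\begin{equation*}
\bigl((\nabla_{x,y}F^\#)(\gamma)\bigr)^2=\bigl(F^\#(\sigma_{x,y}\gamma)-F^\#(\gamma)\bigr)^2\le\bigl(F(\sigma_{x,y}\gamma)-F(\gamma)\bigr)^2=\bigl((\nabla_{x,y}F)(\gamma)\bigr)^2.
\end{equation*}
Multiplying by $c(\gamma,x,y)\ge 0$, summing over $(x,y)\in\tilde{\mathfrak X}^2$, and integrating against $\mu$ gives $\mathcal E(F^\#)\le\mathcal E(F)$ directly from the diagonal form of \eqref{4E}. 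Since $F^\#$ is again a cylinder function when $F$ is, this establishes the Markov property on the core, and closability together with lower semicontinuity of $\bar{\mathcal E}$ along the closure lets me extend it to all of $\mathbb D(\bar{\mathcal E})$. Thus $(\bar{\mathcal E},\mathbb D(\bar{\mathcal E}))$ is a Dirichlet form.

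For regularity I would exploit that $\Gamma=\{0,1\}^{\mathfrak X}$ is compact, so $C_0(\Gamma)=C(\Gamma)$ and the two defining requirements of regularity collapse to showing that $\tilde\scrC$ (or a suitable subset of $\mathbb D(\bar{\mathcal E})$) is dense both in $(\mathbb D(\bar{\mathcal E}),\,\bar{\mathcal E}_1^{1/2})$ and in $C(\Gamma)$ with the uniform norm, where $\bar{\mathcal E}_1:=\bar{\mathcal E}+(\cdot,\cdot)_{L^2(\Gamma,\mu)}$. The first density is automatic since $\tilde\scrC$ is by construction a core for the closure. For the uniform density I would invoke the Stone--Weierstrass theorem: the cylinder functions $\scrC$ form an algebra containing the constants and separating points of the product space $\Gamma$, hence $\scrC$ is dense in $C(\Gamma)$; passing to images in $\tilde\scrC$ completes the argument. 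The main obstacle I anticipate is the interplay between $\scrC$ and its image $\tilde\scrC$ in $L^2(\Gamma,\mu)$ when $\supp\mu\neq\Gamma$: one must be careful that the uniform-density statement, which naturally lives on $C(\Gamma)$, transfers correctly to the functions on $\supp\mu$ that $\tilde\scrC$ actually represents. This is handled by noting that restriction to $\supp\mu$ is a continuous algebra homomorphism onto a separating subalgebra of $C(\supp\mu)$, so Stone--Weierstrass applies on $\supp\mu$ as well, and the regularity conditions are to be read relative to the compact space $\supp\mu$ carrying the measure.
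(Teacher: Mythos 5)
Your proposal is correct and takes essentially the same route as the paper: Markovianity is verified on the core $\tilde\scrC$ via the pointwise contraction inequality applied inside \eqref{4E}, transferred to the closure by the standard preservation theorem, and regularity follows from the density of cylinder functions in $C(\supp\mu)$ for the compact space $\supp\mu\subseteq\Gamma$. You merely make explicit the Stone--Weierstrass step and the $\supp\mu$ versus $\Gamma$ subtlety that the paper leaves implicit.
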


\begin{proof}
For each $F\in\mathscr C$, we clearly have $(0\vee F)\wedge 1\in\mathscr C$ and
$\mathcal E((0\vee F)\wedge 1)\le \mathcal E(F)$. Therefore, $(\mathcal
E,\tilde\scrC)$ is Markovian. Since this property is preserved under closing
(see \cite[Theorem 3.1.1]{Fu}), the form $(\bar{\mathcal E},\mathbb
D(\bar{\mathcal E}))$ is Markovian, too. Hence it is a Dirichlet form. Finally,
by the very construction, it is  regular, because $\mathbb D(\bar{\mathcal
E})$ includes $\tilde\scrC$, which is dense in the space of continuous
functions on the compact space $\supp\mu\subseteq\Gamma$.
\end{proof}

\begin{theorem}\label{th4D}
Let, as above, $\mu$ be a $\mathfrak S(\X)$-quasi-invariant probability
measure on the configuration space $\Gamma=\Gamma_\X$ and
$c(\gamma,x,y)=c(\gamma,y,x)$ be a nonnegative measurable function satisfying
\eqref{4B} and \eqref{4F}. Then the corresponding form $(\bar{\mathcal E},\mathbb D(\bar{\mathcal E}))$,
as defined above, gives rise to a conservative symmetric Markov semigroup
$\{T_t\}_{t\ge0}$ in $L^2(\Gamma,\mu)$, which in turn determines a symmetric
Hunt process on $\supp\mu\subseteq\Gamma$.
\end{theorem}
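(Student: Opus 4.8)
The plan is to invoke the abstract theory of regular Dirichlet forms, since Lemma~\ref{le4C} has already done the essential analytic work by establishing that $(\bar{\mathcal E},\mathbb D(\bar{\mathcal E}))$ is a regular Dirichlet form on the compact space $\supp\mu\subseteq\Gamma$. The existence of an associated symmetric Markov semigroup and an associated Hunt process is then precisely the content of the general correspondence theorem of Fukushima--Oshima--Takeda (see \cite[Theorem~7.2.1]{Fu}): to every regular Dirichlet form on $L^2(E,m)$, with $E$ a locally compact separable metric space, there corresponds an $m$-symmetric Hunt process on $E$ whose transition semigroup is a version of $\{T_t\}_{t\ge 0}$, where $\{T_t\}$ is the strongly continuous symmetric contraction semigroup generated by the nonnegative selfadjoint operator associated with $\bar{\mathcal E}$. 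Here $E=\supp\mu$, which is a closed subset of the compact metrizable space $\Gamma=\{0,1\}^\X$ and hence is itself compact and metrizable, so the hypotheses of the abstract theorem are met.

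First I would record that the semigroup $\{T_t\}_{t\ge0}$ is obtained in the standard way: the closed form $\bar{\mathcal E}$ determines, via the representation theorem for closed nonnegative symmetric forms, a unique nonnegative selfadjoint operator $H$ on $L^2(\Gamma,\mu)$ with $\mathbb D(\bar{\mathcal E})=\mathbb D(H^{1/2})$ and $\bar{\mathcal E}(F,G)=(H^{1/2}F,H^{1/2}G)$; then $T_t:=e^{-tH}$ is a strongly continuous semigroup of symmetric contractions. Because $\bar{\mathcal E}$ is a Dirichlet form (the Markovian property from Lemma~\ref{le4C}), $\{T_t\}$ is in fact \emph{Markovian}, i.e. $0\le T_tF\le 1$ whenever $0\le F\le 1$; this is the content of \cite[Theorem~1.4.1]{Fu}. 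Next I would verify \emph{conservativeness}, meaning $T_t\mathbf 1=\mathbf 1$ for all $t$. Since $\Gamma$ is compact, the constant function $\mathbf 1$ lies in $L^2(\Gamma,\mu)$; moreover $\mathbf 1\in\scrC\subseteq\mathbb D(\bar{\mathcal E})$ with $\bar{\mathcal E}(\mathbf 1,\mathbf 1)=0$, because $\nabla_{x,y}\mathbf 1=0$ identically. A nonnegative closed form value of zero forces $H^{1/2}\mathbf 1=0$, hence $H\mathbf 1=0$ and $T_t\mathbf 1=e^{-tH}\mathbf 1=\mathbf 1$, which is exactly conservativeness. Thus $\{T_t\}$ is a conservative symmetric Markov semigroup.

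Having the semigroup, I would apply \cite[Theorem~7.2.1]{Fu} to produce a $\mu$-symmetric Hunt process $\mathbf M=(\Omega,\mathcal F,\{X_t\},\{P_\gamma\})$ on $\supp\mu$ whose transition function $p_t(\gamma,\cdot)$ satisfies $p_tF=T_tF$ $\mu$-a.e. for every bounded measurable $F$; this is where regularity of the form is indispensable, since it is precisely regularity that guarantees the existence of the process (and not merely of the semigroup). Conservativeness of $\{T_t\}$ translates into $\mathbf M$ having infinite lifetime almost surely, so no cemetery state is needed. Finally, $\mu$-symmetry of $\mathbf M$, i.e. reversibility of the process with respect to $\mu$, is the probabilistic counterpart of the symmetry of the semigroup on $L^2(\Gamma,\mu)$, which in turn reflects the symmetry condition \eqref{4B} built into the form $\mathcal E$.

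The only genuine obstacle is the verification that the hypotheses of the abstract existence theorem are literally satisfied, and this has essentially been arranged already: the state space $\supp\mu$ must be a locally compact separable metric space, which holds because $\Gamma$ is compact metrizable and $\supp\mu$ is closed; the reference measure $\mu$ must have full support on $\supp\mu$, which is true by the definition of topological support; and the form must be regular, which is exactly Lemma~\ref{le4C}. Consequently the proof is a short assembly: invoke the spectral/representation correspondence to obtain $\{T_t\}$, check Markovianity and conservativeness as above using $\mathbf 1\in\mathbb D(\bar{\mathcal E})$ with zero energy, and then quote the Fukushima theorem to realize the semigroup by a symmetric Hunt process on $\supp\mu$.
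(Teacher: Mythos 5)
Your proposal is correct and follows exactly the paper's route: the semigroup comes from the closed form via the standard correspondence, Markovianity from the Dirichlet property, conservativeness from $\mathcal E(\mathbf 1)=0$ (which forces $T_t\mathbf 1=\mathbf 1$), and the Hunt process from the regularity of the form via the Fukushima--Oshima--Takeda theorem. The paper's own proof is just a terser statement of the same assembly of citations to \cite{Fu}.
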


\begin{proof}
The existence of $\{T_t\}$ follows from the fact that $(\bar{\mathcal
E},\mathbb D(\bar{\mathcal E}))$ is a Dirichlet form \cite[Section 1.3]{Fu}.
Conservativity holds because ${\mathcal E}(1)=0$. The existence of a Hunt
process is a consequence of the regularity of the form, see \cite[Chapter
7]{Fu}.
\end{proof}

\begin{remark} If the rate function satisfies the
``$L^2$-condition'' \eqref{4C}, then one can say more. Let $\widehat A$ stand
for infinitesimal generator of the semigroup $\{T_t\}$, so that $-\widehat A$
is the nonnegative selfadjoint operator associated with the form
$(\bar{\mathcal E},\mathbb D(\bar{\mathcal E}))$. Then, by virtue of Lemma
\ref{le4A}, $\widehat A$ is the Friedrichs' extension of the symmetric operator
$A$ determined by \eqref{4D}. In the general case, however, the generator is
determined implicitly and we cannot even say whether its domain contains
$\tilde\scrC$.
\end{remark}

As an illustration, in the examples below we discuss 3 variants of choice of
the rate function. Let us introduce some notation. Let
\begin{equation}\label{4I}
\varphi(\gamma,x,y):=\frac{\mu(\sigma_{x,y}(d\gamma))}{\mu(d\gamma)}\,, \quad
\gamma\in\Gamma,\ (x,y)\in\tilde\X^2,
\end{equation}
stand for the Radon--Nikod\'ym derivative. Note that
\begin{equation}\label{4K}
\varphi(\sigma_{x,y}(\gamma),x,y)=(\varphi(\gamma, x,y))^{-1}.
\end{equation}
Next, observe that the symmetry condition \eqref{4B} can be restated in the
following form:
\begin{equation}\label{4J}
c(\gamma,x,y)=\varphi^{\frac12}(\gamma,x,y)a(\gamma,x,y) \quad \mbox{\rm
with}\quad a(\gamma,x,y)=a(\sigma_{x,y}(\gamma),x,y).
\end{equation}
Finally, fix an arbitrary  function $u(x,y)$ on $\tilde\X^2$ such that
$u(x,y)=u(y,x)\ge0$ and, for any fixed $x$, $\sum_y u(x,y)<\infty$. For
instance, if $\X$ is the vertex set of a locally finite graph, one may suppose
that $u(x,y)=0$ unless $\{x,y\}$ is an edge.

\begin{example}\label{ex1}
Set
$$
c(\gamma,x,y)=u(x,y)\min(\varphi(\gamma,x,y),1)
$$
(compare with the well-known Metropolis dynamics). Equivalently,
$$
a(\gamma,x,y)=u(x,y)\min\left(\varphi^{\frac12}(\gamma,x,y),\,
\varphi^{-\frac12}(\gamma,x,y)\right),
$$
which satisfies the required symmetry property by virtue of \eqref{4K}. For any
$(x,y)$, the $L^2$-norm of the function $c(\,\cdot\,,x,y)$ is less than or equal to
$u(x,y)$. Consequently, the assumption on $u(x,y)$ guarantees the fulfilment of
the ``$L^2$-condition'' \eqref{4C}.
\end{example}

\begin{example}\label{ex2}
Set
$$
c(\gamma,x,y)=u(x,y)\varphi^{\frac12}(\gamma,x,y),
$$
which means $a(\gamma,x,y)=u(x,y)$. Since the $L^2$-norm of the function
$\varphi^{\frac12}(\,\cdot\,,x,y)$ equals 1, the $L^2$-norm of
$c(\,\cdot\,,x,y)$ equals $u(x,y)$, which again implies the
``$L^2$-condition'' \eqref{4C}.
\end{example}

\begin{example}\label{ex3} Set
$$
c(\gamma,x,y)=u(x,y)(\varphi(\gamma,x,y)+1),
$$
which is equivalent to
$$
a(\gamma,x,y)=u(x,y)\left(\varphi^{\frac12}(\gamma,x,y)+\varphi^{-\frac12}(\gamma,x,y)\right)
$$
(compare with the Glauber dynamics discussed in \cite{ShYoo}). In this case we
cannot dispose of the ``$L^2$-condition'' \eqref{4C};
 we cannot even claim that the function $\varphi(\,\cdot\,,
x,y)$ certainly belongs to $L^2(\Gamma,\mu)$. Instead of this, we observe that
the latter function has $L^1$-norm 1, which implies the fulfillment of the
``$L^1$-condition'' \eqref{4F}. This weaker condition still makes it possible
to apply Theorem \ref{4D}, but gives a less precise description of the generator
of the process.

However, in the concrete case when $\mu$ is one of the gamma kernel measures
$\mu_{z,z'}$, it turns out that the function $\varphi(\,\cdot\,, x,y)$ does
belong to $L^2(\Gamma,\mu)$.
Thus, we can satisfy the
``$L^2$-condition'' \eqref{4C}
 provided that $u(x,y)$ satisfies some additional
assumptions (for instance, it suffices to require that for any fixed $x$,
$u(x,y)$ vanishes for all but finitely many $y$'s). The fact that
$\varphi(\,\cdot\,, x,y)$ is square integrable follows from the results of
\cite{Olsh2}: There it is proved that the Radon--Nikod\'ym derivative
$\varphi(\gamma,x,y)$ belongs to the algebra of functions on $\Gamma$ spanned
by the so-called multiplicative functionals; on the other hand, each such
functional is integrable with respect to $\mu_{z,z'}$, hence any element of the
algebra is integrable, which implies that $(\varphi(\,\cdot\,, x,y))^2$ is
integrable.
\end{example}

\begin{remark}
As seen from the above examples, there is quite a lot of flexibility about the
choice of the rate function. Of course, the rate function should be specified
depending on a concrete problem. Finally, note that in the lattice spin models
of statistical mechanics, due to short-range interaction of spins, the function
$\varphi(\gamma,x,y)$ usually takes a simple form and depends only on a small finite
part of the whole spin configuration $\gamma$. For the gamma kernel measures,
the structure of $\varphi(\gamma,x,y)$ is a much more sophisticated.
\end{remark}

\medskip

{\bf Acknowledgements}. We would like to thank Yuri Kondratiev for useful
discussions.  EL was partially supported by   the project SFB 701 (Bielefeld
University). GO was partially supported by a grant from Simons Foundation
(Simons--IUM Fellowship), the RFBR-CNRS grant 10-01-93114, and the project SFB
701 of Bielefeld University.

\end{document}